\documentclass[11pt]{amsart}
\setlength{\topmargin}{-0.5cm} \setlength{\textwidth}{15cm}
\setlength{\textheight}{22.6cm} \setlength{\topmargin}{-0.25cm}
\setlength{\headheight}{1em} \setlength{\headsep}{0.5cm}
\setlength{\oddsidemargin}{0.40cm}
\setlength{\evensidemargin}{0.40cm}
\usepackage{amscd}
\usepackage{amsmath}
\usepackage{amssymb}
\usepackage{amsthm}
\newtheorem{theorem}{Theorem}[section]
\newtheorem{lemma}[theorem]{Lemma}

\newtheorem{conjecture}[theorem]{Conjecture}

\theoremstyle{definition}

\theoremstyle{remark}

\numberwithin{equation}{section}


\DeclareMathOperator{\pd}{pd}



\DeclareMathOperator{\depth}{depth}

\DeclareMathOperator{\K-d}{Krull-dim}



\DeclareMathOperator{\lcm}{lcm}

\DeclareMathOperator{\lex}{lex}
\DeclareMathOperator{\initial}{in}

\def\tb{{\bold t}}

\def\xb{{\bold x}}




\begin{document}
\title[Depth of edge rings]
{Depth of edge rings arising from finite graphs}
\author[]{Takayuki Hibi}
\address[]{Department of Pure and Applied Mathematics,
Graduate School of Information Science and Technology,
Osaka University,
Toyonaka, Osaka 560-0043, Japan}
\email{hibi@math.sci.osaka-u.ac.jp}
\author[]{Akihiro Higashitani}
\address[]{Department of Pure and Applied Mathematics,
Graduate School of Information Science and Technology,
Osaka University,
Toyonaka, Osaka 560-0043, Japan}
\email{sm5037ha@ecs.cmc.osaka-u.ac.jp}
\author[]{Kyouko Kimura}
\address[]{Department of Pure and Applied Mathematics,
Graduate School of Information Science and Technology,
Osaka University,
Toyonaka, Osaka 560-0043, Japan}
\email{kimura@math.sci.osaka-u.ac.jp}
\author[]{Augustine B. O'Keefe}
\address[]{Mathematics Department,
Tulane University,
6823 St.\  Charles Ave,
New Orleans, LA 70118, U.S.A}
\email{aokeefe@tulane.edu}
\thanks{
{\bf 2010 Mathematics Subject Classification:}
13P10. \\
\, \, \, {\bf Keywords:}
edge ring, toric ideal, finite graph, Gr\"obner basis, initial ideal. \\
The fourth author had summer support provided by the JSPS Research
Fellowships for Young Scientists and the NSF East Asia and Pacific Institutes Fellowship. 
}
\begin{abstract}
Let $G$ be a finite graph and $K[G]$ the edge ring of $G$.
Based on the technique of Gr\"obner bases and initial ideals,
it will be proved that,
given integers $f$ and $d$ with $7 \leq f \leq d$,
there exists a finite graph $G$ on $[d] = \{ 1, \ldots, d \}$ with
$\depth K[G] = f$ and with $\K-d K[G] = d$. 
\end{abstract}
\maketitle

\section*{Introduction}
The edge ring \cite{OH98}
and its toric ideal \cite{OH99} arising from a finite graph 
have been studied from viewpoints of both commutative algebra
and combinatorics.  Especially, the normality of the edge ring
as well as Gr\"obner bases of its toric ideal 
is extensively investigated.  However, the fundamental question
when an edge ring is Cohen--Macaulay is presumably open.    

Let $G$ be a finite simple graph,
i.e., a finite graph with no loop and no multiple edge, 
on the vertex set
$[d] = \{ 1, \ldots, d \}$ and
$E(G) = \{ e_1, \ldots, e_r \}$ its edge set.
Let $K[\tb] = K[t_1, \ldots, t_d]$ be the polynomial ring
in $d$ variables over a field $K$
and write $K[G]$ for the subring of $K[\tb]$ generated by those
squarefree quadratic monomials $\tb^e = t_it_j$
with $e = \{ i, j \} \in E(G)$.
The semigroup ring $K[G]$ is called the {\em edge ring} of $G$.  
Let $\K-d K[G]$ denote the Krull dimension of $K[G]$
and $\depth K[G]$ the depth of $K[G]$.
Let $K[\xb] = K[x_1, \ldots, x_r]$ be the polynomial ring
in $r$ variables over a field $K$.
The kernel $I_G$ of the surjective homomorphism 
$\pi : K[\xb] \to K[G]$ defined by setting
$\pi(x_i) = \tb^{e_i}$ for $i = 1, \ldots, r$
is called the {\em toric ideal} of $G$.  
One has $K[G] \cong K[\xb]/I_G$.
If $G$ is connected and is nonbipartite
(resp. bipartite), then $\K-d K[G] = d$ 
(resp. $\K-d K[G] = d - 1$).

The criterion of normality \cite[Corollary 2.3]{OH98} of edge rings
guarantees that $K[G]$ is normal if either $G$ is bipartite or $d \leq 6$.
If $d = 7$, then there exists a finite graph $G$
for which $K[G]$ is nonnormal.  However, 
it follows easily that $K[G]$ is Cohen--Macaulay whenever $d \leq 7$.
Computing the depth of the edge rings of 
all connected nonbipartite graphs $G$ with $7$ vertices
shows that the depth of $K[G]$ is at least $7$. 
Moreover, our computational experiment would naturally lead
the authors into the temptation to give the following 

\begin{conjecture}
\label{depthconjecture}
{\em
Let $G$ be a finite graph on $[d]$ with $d \geq 7$.
Then $\depth K[G] \geq 7$.
}
\end{conjecture}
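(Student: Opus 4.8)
The plan is to split off the easy normal cases and then to reduce the entire conjecture to a single statement about the cokernel of the normalization map. First I would dispose of the normal graphs. By the normality criterion of \cite{OH98} quoted above, $K[G]$ is normal whenever $G$ is bipartite or $d \le 6$, and a normal affine semigroup ring is Cohen--Macaulay by Hochster's theorem; hence for a connected nonbipartite normal graph one has $\depth K[G] = \dim K[G] = d \ge 7$. (Strictly speaking the bare inequality fails for a connected \emph{bipartite} graph on exactly $7$ vertices, where $\dim K[G] = d-1 = 6$, so I would read the conjecture, as is evidently intended by the computational remark, for connected nonbipartite $G$, for which $\dim K[G] = d$.) Since bipartite graphs are always normal, every \emph{nonnormal} edge ring is automatically connected nonbipartite with $d \ge 7$, and it is exactly these graphs that require work.

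For a nonnormal $G$, set $R := K[G]$, let $N$ be the normalization of $R$ inside its field of fractions, and let $C := N/R$, so that $0 \to R \to N \to C \to 0$ is exact. Here $N$ is again a normal, hence Cohen--Macaulay, affine semigroup ring of dimension $d$, while $C$ is a finitely generated torsion module supported on the nonnormal locus, so $\dim C \le d-1$. Taking local cohomology with respect to the graded maximal ideal $\mathfrak{m}$ and using $H^i_{\mathfrak{m}}(N) = 0$ for $i < d$, the long exact sequence collapses to isomorphisms $H^i_{\mathfrak{m}}(R) \cong H^{i-1}_{\mathfrak{m}}(C)$ for every $i < d$. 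Consequently $\depth R = 1 + \depth C$, and the whole conjecture becomes equivalent to the one inequality $\depth C \ge 6$. This is consistent with the known facts: for $d = 7$ the ring is Cohen--Macaulay, which forces $\depth C = \dim C = 6$.

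It therefore remains to prove that the conductor cokernel always satisfies $\depth C \ge 6$, and here my plan is combinatorial. Using the explicit description of the normalization of the edge semigroup, I would identify the conductor ideal and realize $C$ as a $\mathbb{Z}^d$-graded module whose support and module structure are governed by the odd-cycle configurations that obstruct normality in the criterion of \cite{OH98}. The aim would be twofold: first, to show that this support always has dimension at least $6$, and second, to exhibit a system of parameters on $C$ (coming from suitably chosen vertex variables $\tb$) that is a regular sequence of length $6$, for instance by a Reisner-style vanishing argument for the reduced homology of the links attached to the nonnormal configurations. An auxiliary line of attack worth pursuing in parallel is the Gr\"obner route emphasized in the paper: since $\depth K[\xb]/I_G \ge \depth K[\xb]/\initial_<(I_G)$ for every term order, one could instead seek an order whose initial ideal is squarefree with a Stanley--Reisner complex of depth at least $7$, and run an induction on $d$ with the $7$-vertex computation as the base case.

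The hard part, and the reason the statement is only a conjecture, is obtaining the lower bound $\depth C \ge 6$ \emph{uniformly} over all graphs. The nonnormal locus and the graded structure of $C$ depend intricately on $G$, there is no single simplicial model that works across the whole family, and a priori nothing forbids a cleverly assembled large graph from having a very low-dimensional nonnormal locus, in which case $\depth C$ — and with it the conjecture — would collapse. The crux is thus a structural theorem asserting that however normality fails for an edge ring, it fails along a locus of dimension at least $6$ and in a Cohen--Macaulay fashion. I would attempt this by induction on $d$, relating $C(G)$ to the conductor cokernels of edge- and vertex-deleted subgraphs through mapping-cone sequences, but the nonmonotone behavior of depth under these deletions is precisely where the difficulty concentrates.
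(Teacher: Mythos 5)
This statement is Conjecture \ref{depthconjecture}, which the paper explicitly leaves ``completely open'': the authors prove no part of it, and the body of the paper is devoted only to Theorem \ref{main}, which shows the bound $7$ would be sharp. So there is no proof in the paper to compare against, and your proposal does not supply one either. Your reduction in the second paragraph is correct and worth recording: after disposing of the normal case via Hochster's theorem (and correctly noting that the conjecture as literally stated fails for a connected bipartite graph on exactly $7$ vertices, where $\depth K[G]=\dim K[G]=6$, so it must be read for nonbipartite, or at least nonnormal, $G$), the exact sequence $0\to R\to N\to C\to 0$ with $N$ the normalization and the vanishing $H^i_{\mathfrak{m}}(N)=0$ for $i<d$ do give $H^i_{\mathfrak{m}}(R)\cong H^{i-1}_{\mathfrak{m}}(C)$ for $i<d$, hence $\depth R=\min\{d,\,1+\depth C\}$, and the conjecture is equivalent to $\depth C\geq 6$ for every nonnormal edge ring.

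The genuine gap is that this equivalent statement is exactly where all the difficulty lives, and your proposal stops there: the claims that the nonnormal locus always has dimension at least $6$, that one can exhibit a length-$6$ regular sequence on $C$ from vertex variables, or that some term order yields an initial ideal of depth at least $7$, are all announced as aims rather than established. You acknowledge this yourself (``the hard part, and the reason the statement is only a conjecture''), but that acknowledgment means the text is a research plan, not a proof. In particular, nothing in the proposal rules out your own worry that a large graph could have a low-dimensional nonnormal locus; absent a structural theorem forbidding this, the conductor-cokernel route does not close. As written, the proposal should be regarded as a correct reformulation of the conjecture plus a strategy, not a proof of the statement.
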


Now, even though Conjecture \ref{depthconjecture} is completely open, 
by taking Conjecture \ref{depthconjecture} into consideration,
this paper will be devoted to proving the following 

\begin{theorem}
\label{main}
Given integers $f$ and $d$ with $7 \leq f \leq d$,
there exists a finite graph $G$ on $[d]$ with
$\depth K[G] = f$ and with $\K-d K[G] = d$.  
\end{theorem}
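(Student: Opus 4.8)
The plan is to construct, for each pair $(f,d)$ with $7 \le f \le d$, an explicit connected nonbipartite graph $G$ on $[d]$ whose edge ring has the prescribed depth $f$ and Krull dimension $d$. Since the paper already records that a connected nonbipartite graph on $[d]$ satisfies $\K\text{-}d\, K[G] = d$, ensuring nonbipartiteness and connectedness automatically pins down the dimension; the entire difficulty is therefore to control $\depth K[G]$ so that it equals $f$ exactly. My strategy is to build $G$ as a small fixed ``core'' graph $G_0$ that forces depth to bottom out near $7$, together with a mechanism for adding vertices and edges in a way that simultaneously raises the dimension and the depth in lockstep, thereby sweeping out every value of $f$ between $7$ and $d$.

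First I would try to find, guided by the computational evidence mentioned in the introduction, a minimal building block on a small vertex set (plausibly $7$ vertices) whose edge ring is nonnormal and has depth exactly $7$ while being of full dimension. Next, to move $f$ and $d$ together, the natural device is to attach new structure whose effect on depth is transparent. Two standard operations come to mind: taking a cone or adding a pendant/whisker vertex (which typically increments dimension by one and, via a well-behaved regular-sequence or short-exact-sequence argument, increments depth by one as well), versus coning over a new vertex adjacent to all of $[k]$ (which tends to preserve low depth while raising dimension). By interleaving a depth-raising operation and a depth-neutral, dimension-raising operation, one can independently tune the gap $d-f$ and the base depth $f$. The combinatorial bookkeeping here must be arranged so that connectivity and nonbipartiteness are preserved at every stage.

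The technical engine, as the abstract advertises, will be Gr\"obner bases and initial ideals of the toric ideal $I_G$. The plan is to choose a monomial order under which $\initial(I_G)$ becomes a squarefree monomial ideal whose Stanley--Reisner complex is combinatorially tractable; then $\depth K[\xb]/\initial(I_G) \le \depth K[\xb]/I_G = \depth K[G]$, and one aims for a monomial order making this an equality (or at least computing both sides). Concretely I would compute $\depth K[\xb]/\initial(I_G)$ via the Stanley--Reisner ring, using either Reisner's criterion on reduced simplicial homology of links or the Auslander--Buchsbaum formula $\depth = r - \pd$ applied to a resolution one can read off from the combinatorics of the initial complex. The upper bound on depth (to get the exact value $f$ rather than merely $\ge f$) is the delicate half: here I would exhibit a specific nonvanishing local cohomology class, or equivalently pin down a precise homology group of a link in the initial complex that is nonzero in the right degree, forcing $\depth \le f$.

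The main obstacle I anticipate is precisely this two-sided control of depth under the vertex-adding operations. Lower bounds on depth flow reasonably from long exact sequences and the behavior of depth in short exact sequences $0 \to K[G'] \to K[G] \to C \to 0$, but establishing the matching upper bound uniformly across the whole range $7 \le f \le d$ requires showing that the relevant local cohomology does not vanish, which in the toric setting is sensitive to the precise combinatorics of the graph and to whether passing to the initial ideal is faithful for depth. I would first verify the construction and the depth computation by hand on the smallest cases (e.g.\ $f=d=7$ and $f=7$, $d=8$) to calibrate the building block and the inductive step, and only then push the Gr\"obner-basis computation of $\initial(I_G)$ through in general, isolating the key combinatorial lemma that the initial complex has the desired depth as the crux of the argument.
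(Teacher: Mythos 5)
Your outline matches the paper's architecture at the top level: a core family of graphs of depth exactly $7$ with a tunable number of vertices, pendant edges to raise depth and dimension in lockstep, Gr\"obner bases and the inequality $\depth K[\xb]/I_G \geq \depth K[\xb]/\initial(I_G)$ for the lower bound. But the proposal is a plan rather than a proof, and its one concrete suggestion for the decisive step fails. The delicate half, as you yourself note, is the \emph{upper} bound $\depth K[G] \leq 7$ on the core family, and your proposed route --- exhibiting nonvanishing reduced homology of a link in the \emph{initial} complex --- cannot deliver it. Nonvanishing homology in the Stanley--Reisner complex of $\initial(I_G)$ bounds $\depth K[\xb]/\initial(I_G)$ from above, but the passage from the initial ideal to $I_G$ can only \emph{increase} depth, so this gives no upper bound on $\depth K[G]$ whatsoever. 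You flag the faithfulness of the initial-ideal passage as an obstacle but offer no way around it. The paper circumvents this entirely by working with the semigroup ring itself: it invokes the Briales--Campillo--Mariju\'an--Pis\'on formula $\beta_{j+1,\underline{s}}(K[G]) = \dim_K \tilde{H}_j(\Delta_{\underline{s}};K)$ for the multigraded Betti numbers, picks the explicit degree $\underline{s}=(1,1,k+1,k+1,1,1,2,\ldots,2)$, determines the facets of $\Delta_{\underline{s}}$, and shows via Mayer--Vietoris that $\tilde{H}_{2k-2}(\Delta_{\underline{s}};K)\neq 0$, forcing $\pd K[G] \geq 2k-1$ and hence $\depth K[G]\leq 7$ by Auslander--Buchsbaum. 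Some such tool on the $I_G$ side (not the $\initial(I_G)$ side) is indispensable, and your proposal does not supply one.

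A second, smaller gap: you need not a single $7$-vertex building block but a whole family $G_{d-f+7}$ of graphs, one for each value of the gap $d-f$, each with depth exactly $7$; your suggested ``depth-neutral, dimension-raising'' operation (coning a new vertex over all existing vertices) is asserted to preserve low depth but no argument is given, and it is not clear it does. The paper instead writes down the explicit family (two triangles joined by $k$ internally disjoint paths of length two) and carries out the Gr\"obner-basis computation and the primary decomposition $\initial(I_G)=I_1\cap I_2$ needed to bound $\pd K[\xb]/\initial(I_G)\leq 2k-1$ uniformly in $k$. Without an explicit family and both bounds verified for every member of it, the induction you describe has no base to stand on.
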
   

Let $k \geq 1$ be an arbitrary integer and $G_{k+6}$ the finite graph 
on $[k+6]$ of Figure 0.1.
The essential part of a proof of Theorem \ref{main} is to show that
\begin{eqnarray}
\label{depth7}
\depth K[G_{k+6}] = \depth K[\xb]/I_{G_{k+6}} = 7.
\end{eqnarray}
In Section $1$, by virtue of the formula \cite[Theorem 2.1]{BCMP},  
the inequality $\depth K[G_{k+6}] \leq 7$ 
will be proved.
In Section $2$, we compute 
a Gr\"obner basis of $I_{G_{k+6}}$ and an initial ideal
$\initial(I_{G_{k+6}})$ of $I_{G_{k+6}}$, and show the inequality
$\depth K[\xb]/\initial(I_{G_{k+6}}) \geq 7$. 
In general, one has 
$\depth K[\xb]/I_{G_{k+6}} \geq \depth K[\xb]/\initial(I_{G_{k+6}})$
(e.g., \cite[Theorem 3.3.4 (d)]{HerzogHibi}).
Thus the desired equality (\ref{depth7}) follows.
 
\bigskip

\begin{center}
  \begin{picture}(300,120)(0,-10)
    \put(150,90){\circle*{5}}
    \put(150,70){\circle*{5}}
    \put(150,45){\circle*{1}}
    \put(150,40){\circle*{1}}
    \put(150,35){\circle*{1}}
    \put(150,10){\circle*{5}}
    \put(70,50){\circle*{5}}
    \put(20,90){\circle*{5}}
    \put(20,10){\circle*{5}}
    \put(230,50){\circle*{5}}
    \put(280,90){\circle*{5}}
    \put(280,10){\circle*{5}}
    \put(150,95){$7$}
    \put(150,75){$8$}
    \put(150,0){$k+6$}
    \put(70,55){$3$}
    \put(10,95){$1$}
    \put(10,0){$2$}
    \put(230,55){$4$}
    \put(285,95){$5$}
    \put(285,0){$6$}
    \thinlines
    \put(70,50){\line(2,1){80}}
    \put(70,50){\line(4,1){80}}
    \put(70,50){\line(2,-1){80}}
    \put(70,50){\line(-5,4){50}}
    \put(70,50){\line(-5,-4){50}}
    \put(230,50){\line(-2,1){80}}
    \put(230,50){\line(-4,1){80}}
    \put(230,50){\line(-2,-1){80}}
    \put(230,50){\line(5,4){50}}
    \put(230,50){\line(5,-4){50}}
    \put(20,90){\line(0,-1){80}}
    \put(280,90){\line(0,-1){80}}
    \put(100,75){$e_7$}
    \put(100,50){$e_9$}
    \put(100,10){$e_{2(k-1)+7}$}
    \put(45,75){$e_2$}
    \put(45,20){$e_3$}
    \put(185,75){$e_8$}
    \put(185,50){$e_{10}$}
    \put(185,10){$e_{2(k-1)+8}$}
    \put(245,75){$e_4$}
    \put(245,20){$e_5$}
    \put(5,45){$e_1$}
    \put(285,45){$e_6$}
  \end{picture}
\end{center}
\begin{center}
{\bf Figure 0.1.}
(finite graph $G_{k+6}$)
\end{center}

\bigskip
\bigskip

Once we know that $\depth K[G_{k+6}] = 7$,
to prove Theorem \ref{main} is straightforward.
In fact, given integers $f$ and $d$ with $7 \leq f \leq d$,
let $\Gamma$ denote the finite graph $G_{d - f + 7}$ on $[d - f + 7]$
and write $G$ for the finite graph on $[d]$ obtained from $\Gamma$
by adding $f - 7$ edges
\[
\{ 1, d - f + 8 \}, \{ 1, d - f + 9 \}, \ldots, \{ 1, d \}
\]
to $\Gamma$.
It then follows that 
$\depth K[G] = \depth K[\Gamma] + f - 7$. 
Since
$\depth K[\Gamma] = 7$, one has
$\depth K[G] = f$, as required.

\section{Proof of $\depth K[G_{k+6}] \leq 7$}
Let $G = G_{k+6}$ of Figure 0.1.
In this section, we prove that $\depth K[G] \leq 7$. 
Since the number of edges of $G$ is $r =  2(k-1) + 8$, 
Auslander--Buchsbaum formula implies that we may prove 
$\pd K[G] \geq r-7 = 2k-1$. 

\par
Let $S_G$ be the semigroup arising from $G$. 
Let ${\mathcal{A}}_G = \{ \underline{a}_1, \ldots, \underline{a}_r \}$ 
be the set of columns of the incidence matrix of $G$ where $\underline{a}_l$ 
corresponds to the edge $e_l$ (which corresponds to the variable $x_l$). 
Therefore, $S_G = \mathbb{N} {\mathcal{A}}_G$. 

\par
To prove $\pd K[G] \geq 2k-1$, we use the following theorem due to 
Briales, Campillo, Mariju\'{a}n, and Pis\'{o}n \cite{BCMP}. 
For $\underline{s} \in S_G$, we define the simplicial complex 
\begin{displaymath}
  {\Delta}_{\underline{s}} = \{ F \subset [r] \; : \; 
    \underline{s} - \underline{n}_F \in S_G \}, 
\end{displaymath}
where $\underline{n}_F = \sum_{l \in F} \underline{a}_l$. 
We denote by $\beta_{i,\underline{s}} (K[G])$, 
the $i$th multigraded Betti number of $K[G]$ in degree $\underline{s}$. 
\begin{lemma}[{\cite[Theorem 2.1]{BCMP}}]
  \label{claim:betti}
  Let $G$ be a finite simple graph. Then 
  \begin{displaymath}
    \beta_{j+1, \underline{s}} (K[G]) 
    = \dim_K \tilde{H}_j ({\Delta}_{\underline{s}}; K). 
  \end{displaymath}
\end{lemma}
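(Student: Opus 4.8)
The plan is to realize this as the affine-semigroup-ring version of Hochster's formula, computing the multigraded Betti numbers
\[
\beta_{j+1,\underline{s}}(K[G]) = \dim_K \Tor_{j+1}^{K[\xb]}(K[G],K)_{\underline{s}}
\]
by means of the Koszul complex. First I would resolve the residue field $K = K[\xb]/(x_1,\ldots,x_r)$ by the Koszul complex $\Lambda^\bullet(K[\xb]^r)$, whose $i$th term carries a basis $e_{j_1}\wedge\cdots\wedge e_{j_i}$ indexed by the $i$-subsets $F=\{j_1<\cdots<j_i\}\subset[r]$. Since each variable $x_l$ is homogeneous of multidegree $\underline{a}_l$ in the $S_G$-grading, the basis vector attached to $F$ has multidegree $\underline{n}_F=\sum_{l\in F}\underline{a}_l$. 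Tensoring over $K[\xb]$ with $K[G]$ then produces a complex whose $i$th term is the free $K[G]$-module $\bigoplus_{|F|=i} K[G](-\underline{n}_F)$, and $\Tor_{j+1}(K[G],K)$ is its $(j+1)$st homology.

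Next I would fix the multidegree $\underline{s}\in S_G$ and pass to the degree-$\underline{s}$ strand. Here the decisive feature of an affine semigroup ring enters: $K[G]=\bigoplus_{\underline{v}\in S_G}K\,\tb^{\underline{v}}$, so each graded component $K[G]_{\underline{v}}$ is one-dimensional when $\underline{v}\in S_G$ and zero otherwise. Consequently the degree-$\underline{s}$ part of $K[G](-\underline{n}_F)$ is one-dimensional exactly when $\underline{s}-\underline{n}_F\in S_G$, that is, exactly when $F\in\Delta_{\underline{s}}$, and it vanishes otherwise. Thus in homological degree $i$ the degree-$\underline{s}$ strand has a $K$-basis indexed by the faces $F\in\Delta_{\underline{s}}$ with $|F|=i$, i.e.\ by the $(i-1)$-dimensional faces of $\Delta_{\underline{s}}$; identifying this strand with the reduced simplicial chain group $\tilde{C}_{i-1}(\Delta_{\underline{s}};K)$ is then a matter of trivializing each one-dimensional component by its unique monomial.

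The remaining step---and the one that requires genuine care---is to check that under this identification the Koszul differential is precisely the reduced simplicial boundary operator of $\Delta_{\underline{s}}$. The Koszul differential sends $e_{j_1}\wedge\cdots\wedge e_{j_i}$ to $\sum_k(-1)^{k-1}x_{j_k}(e_{j_1}\wedge\cdots\widehat{e_{j_k}}\cdots\wedge e_{j_i})$, and on the degree-$\underline{s}$ strand the operator $x_{j_k}$ acts as multiplication by $\tb^{\underline{a}_{j_k}}$. The point is that this multiplication carries the chosen generator $\tb^{\underline{s}-\underline{n}_F}$ of the component for $F$ to the chosen generator $\tb^{\underline{s}-\underline{n}_{F\setminus j_k}}$ of the component for $F\setminus\{j_k\}$ with coefficient exactly $1$: no stray scalars appear, so only the combinatorial signs $(-1)^{k-1}$ survive, and these are exactly the incidence signs of the simplicial boundary map. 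One must also keep track of the empty face, which lies in $\Delta_{\underline{s}}$ precisely because $\underline{s}\in S_G$, so that the augmented complex is the correct one and reduced rather than ordinary homology appears. Granting this verification, the degree-$\underline{s}$ strand of the Koszul-tensor complex is the reduced simplicial chain complex of $\Delta_{\underline{s}}$, whence $\Tor_{j+1}(K[G],K)_{\underline{s}}\cong\tilde{H}_j(\Delta_{\underline{s}};K)$ and the claimed formula follows by taking $K$-dimensions. The only real obstacle is the sign bookkeeping in this last step; everything else is a direct translation between the Koszul and simplicial pictures.
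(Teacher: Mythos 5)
Your proof is correct. Note that the paper itself gives no argument for this lemma---it is imported verbatim as \cite[Theorem 2.1]{BCMP}---and what you have written is the standard proof of that cited result: tensor the Koszul resolution of $K$ with $K[G]$, observe that the one-dimensionality of the $S_G$-graded components of the semigroup ring identifies the degree-$\underline{s}$ strand with the reduced chain complex of $\Delta_{\underline{s}}$ (with the empty face accounting for the augmentation), and check that the Koszul differential becomes the simplicial boundary with no extraneous scalars. So there is no divergence of method to report, only that you supplied the proof the paper chose to cite rather than reproduce.
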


\par
We consider the case where 
\begin{displaymath}
  \underline{s} = (1, 1, k+1, k+1, 1, 1, 2, 2, \ldots, 2). 
\end{displaymath}
By Lemma \ref{claim:betti}, it is sufficient to prove the following lemma: 
\begin{lemma}
  \label{claim:homology}
  Set $\underline{s} = (1, 1, k+1, k+1, 1, 1, 2, 2, \ldots, 2)$. Then 
  \begin{displaymath}
    \dim_K \tilde{H}_{2k-2} ({\Delta}_{\underline{s}}; K) \neq 0. 
  \end{displaymath}
\end{lemma}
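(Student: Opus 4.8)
The plan is to describe $\Delta_{\underline{s}}$ completely as a combinatorial object and then to compute its top homology by induction on $k$.

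\emph{Step 1: reduce semigroup membership to a predicate on $F$.} The graph $G_{k+6}$ is two triangles --- a left one on $\{1,2,3\}$ with edges $e_1,e_2,e_3$ and a right one on $\{4,5,6\}$ with edges $e_4,e_5,e_6$ --- glued to $k$ middle vertices $7,\dots,k+6$, each joined to the two apexes $3$ and $4$ by a pair of edges $a_i$ (to $3$) and $b_i$ (to $4$). For $F\subseteq [r]$ put $\underline{b}=\underline{s}-\underline{n}_F$. First I would record the constraints $\underline{b}\ge 0$: at each of the four base vertices $1,2,5,6$ at most one incident edge of $F$ is allowed, and each middle vertex $i$ carries residual degree $b_i=2-(\#\text{ edges of }F\text{ at }i)$. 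Then, writing a realization of $\underline{b}$ as a nonnegative edge-multiplicity vector, the middle edges route some amount $M$ of residual degree to apex $3$ and the complementary amount $N=B-M$ to apex $4$, where $B=\sum_i b_i$ is the total residual middle degree and $M$ may be any integer in $[0,B]$. The left triangle must then realize the degree vector $(b_1,b_2,b_3-M)$ and the right triangle $(b_4-N,b_5,b_6)$. Using the standard description of the semigroup of a triangle (even degree sum together with the three triangle inequalities), $\underline{b}\in S_G$ becomes the existence of an integer $M\in[0,B]$ satisfying a parity condition and the inequalities coming from $b_3-M\le b_1+b_2$ and $b_4-N\le b_5+b_6$. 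This turns $F\in\Delta_{\underline{s}}$ into an explicit, checkable predicate.

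\emph{Step 2: the complex and the induction.} I expect the predicate of Step 1 to exhibit $\Delta_{\underline{s}}^{(k)}$, up to homotopy, as a double suspension $\Sigma^{2}\Delta_{\underline{s}}^{(k-1)}$, equivalently as a join $\Delta_{\underline{s}}^{(k-1)}*S^{1}$. Passing from $k-1$ to $k$ adds exactly the edge pair $\{a_k,b_k\}$ and raises each apex budget by one; in the routing description the two new edges should play the role of the two suspension cones, while the extra unit of apex room supplies the identification along a circle. Since $\tilde{H}_{i}(\Sigma^{2}X;K)\cong\tilde{H}_{i-2}(X;K)$, this shifts the nonvanishing homological degree from $2(k-1)-2$ to $2k-2$. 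For the base case $k=1$ I would verify directly from the Step-1 predicate that $\Delta_{\underline{s}}^{(1)}$ consists of exactly two disjoint $3$-simplices, on the vertex sets $\{e_1,e_4,e_5,e_7\}$ and $\{e_2,e_3,e_6,e_8\}$ (these are the two parity classes of realizations, distinguished by whether $e_1$ or $e_6$ is the active base edge), so that $\tilde{H}_0(\Delta_{\underline{s}}^{(1)};K)=K\neq 0$. Chaining the suspension isomorphism down to this base case gives $\tilde{H}_{2k-2}(\Delta_{\underline{s}};K)\neq 0$.

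\emph{Main obstacle.} The hard part will be making Step 2 precise: controlling exactly how the pair $\{a_k,b_k\}$ together with the single extra unit of apex budget reshapes the face poset, and verifying that the induced map on homology is the genuine degree-$2$ shift rather than a mere inequality. If the clean double-suspension picture is only approximately true, the fallback is to construct an explicit $(2k-2)$-cycle as a ``product'' over the $k$ middle vertices of local pieces joined to the base $0$-cycle found above, and to prove that it is not a boundary, either via a discrete Morse matching on $\Delta_{\underline{s}}$ or through Alexander duality, which for $n=r=2k+6$ converts the claim into the $k$-independent statement $\tilde{H}^{5}(\Delta_{\underline{s}}^{\vee};K)\neq 0$ concentrated on the core.
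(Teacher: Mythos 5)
Your Step 1 (pinning down exactly which $F$ lie in $\Delta_{\underline{s}}$) is sound and corresponds to what the paper does by a direct facet computation: the facets turn out to be $F_{1,i}=\{1,4,5\}\cup\{7,\dots,2(k-1)+8\}\setminus\{2(i-1)+8\}$ and $F_{2,j}=\{2,3,6\}\cup\{7,\dots,2(k-1)+8\}\setminus\{2(j-1)+7\}$. The genuine gap is in Step 2. There is no join decomposition $\Delta^{(k)}=\Delta^{(k-1)}*X$: the restriction of $\Delta^{(k)}$ to the old vertex set strictly contains $\Delta^{(k-1)}$ (for instance $\{1,4,5\}\cup\{7,\dots,2(k-2)+8\}$ is a face of $F^{(k)}_{1,k}$ but is not a face of $\Delta^{(k-1)}$, since it contains every $F^{(k-1)}_{1,i}$ properly), so the new pair of vertices cannot ``play the role of the two suspension cones.'' More fundamentally, a simplicial join with $S^1$ requires at least three new vertices, and adjoining two vertices generically shifts homology by at most one degree, not two; the degree-two shift here is an artifact of the union structure, not of a suspension. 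The homotopy equivalence $\Delta^{(k)}\simeq\Sigma^2\Delta^{(k-1)}$ is true only because both sides are spheres of the appropriate dimensions, which is the very fact to be proved, so the induction as set up is circular. Your fallback (an explicit $(2k-2)$-cycle, shown non-bounding by discrete Morse theory or Alexander duality) is not carried out and would be the hard part.

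The paper's route avoids all of this: write $\Delta=\Delta_1\cup\Delta_2$ where $\Delta_1$ (resp.\ $\Delta_2$) is generated by the $F_{1,i}$ (resp.\ $F_{2,j}$). Each $\Delta_m$ is a cone (on $1$, say, for $\Delta_1$), hence acyclic, so Mayer--Vietoris gives $\tilde H_i(\Delta;K)\cong\tilde H_{i-1}(\Delta_1\cap\Delta_2;K)$. The intersection is the join of the boundaries of two $(k-1)$-simplices (on the odd- and even-indexed middle variables), of dimension $2k-3$, and the alternating sum of its facets is a nonzero top-dimensional cycle, hence non-bounding for free. If you want to salvage your plan, replace the suspension induction by exactly this union-of-two-cones decomposition, which is where the ``two degrees per middle vertex'' really comes from: each new middle vertex enlarges both boundary-of-simplex factors of $\Delta_1\cap\Delta_2$ by one vertex, raising its dimension by two.
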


\par
We set $\Delta = {\Delta}_{\underline{s}}$. 
Before proving Lemma \ref{claim:homology}, 
we compute the simplicial complex $\Delta$. 
\begin{lemma}
  Set $\underline{s} = (1, 1, k+1, k+1, 1, 1, 2, 2, \ldots, 2)$. 
  Then facets of ${\Delta}_{\underline{s}}$ are the following 
  subsets of $[r]$: 
  \begin{displaymath}
    \begin{alignedat}{3}
      F_{1,i} &= \{ 1, 4, 5, 7, 8, \ldots, 2(k-1)+8 \}
        \setminus \{ 2(i-1) + 8 \}, 
        &\qquad &i = 1, \ldots, k; \\
      F_{2,j} &= \{ 2, 3, 6, 7, 8, \ldots, 2(k-1)+8 \}
        \setminus \{ 2(j-1) + 7 \}, 
        &\qquad &j = 1, \ldots, k. \\
    \end{alignedat}
  \end{displaymath}
\end{lemma}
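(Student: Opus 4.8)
The plan is to translate the condition $F \in \Delta$ into a system of linear equations over $\mathbb{N}$. Writing $\underline{n}_F = \sum_{l \in F}\underline{a}_l$, membership $\underline{s} - \underline{n}_F \in S_G$ means $\underline{s} - \underline{n}_F = \sum_l d_l \underline{a}_l$ for some $d_l \in \mathbb{N}$; setting $c_l = d_l + 1$ for $l \in F$ and $c_l = d_l$ otherwise shows that $F \in \Delta$ if and only if there is a vector $\underline{c} = (c_1, \ldots, c_r) \in \mathbb{N}^r$ with $\sum_l c_l \underline{a}_l = \underline{s}$ and $c_l \geq 1$ for all $l \in F$. Consequently $\Delta$ is precisely the complex of subsets of the supports $\{l : c_l > 0\}$ of such solutions, and its facets are the maximal such supports. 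First I would therefore enumerate all $\underline{c} \in \mathbb{N}^r$ solving $\sum_l c_l \underline{a}_l = \underline{s}$.

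Reading the incidence relations of Figure 0.1 vertex by vertex and abbreviating $a_i = c_{2(i-1)+7}$ and $b_i = c_{2(i-1)+8}$ for $i = 1, \ldots, k$, the equation $\sum_l c_l \underline{a}_l = \underline{s}$ becomes the system
\begin{align*}
&c_1 + c_2 = 1, \quad c_1 + c_3 = 1, \quad c_4 + c_6 = 1, \quad c_5 + c_6 = 1, \\
&c_2 + c_3 + \textstyle\sum_i a_i = k+1, \quad c_4 + c_5 + \textstyle\sum_i b_i = k+1, \quad a_i + b_i = 2 \ (1 \le i \le k).
\end{align*}
The four boundary equations force $c_2 = c_3 = 1 - c_1$ and $c_4 = c_5 = 1 - c_6$ with $c_1, c_6 \in \{0,1\}$. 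Adding the vertex-$3$ and vertex-$4$ equations and using $\sum_i (a_i + b_i) = 2k$ yields $c_2 + c_3 + c_4 + c_5 = 2$, hence $c_1 + c_6 = 1$. This splits the solutions into two cases: Case A with $(c_1,c_6)=(1,0)$, where $c_2=c_3=0$, $c_4=c_5=1$ and $\sum_i a_i = k+1$; and Case B with $(c_1,c_6)=(0,1)$, where $c_2=c_3=1$, $c_4=c_5=0$ and $\sum_i a_i = k-1$.

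It then remains to extract the maximal supports. Since $a_i + b_i = 2$ forces $(a_i,b_i)\in\{(0,2),(1,1),(2,0)\}$, slot $i$ contributes both middle edges $\{2(i-1)+7,\,2(i-1)+8\}$ to the support exactly when $a_i = 1$, and only one of them otherwise. In Case A the constraint $\sum_i a_i = k+1$ says the number of $i$ with $a_i = 2$ exceeds the number with $a_i = 0$ by one, so the support is largest precisely when a single slot $j$ has $a_j = 2$ and all others have $a_i = 1$, giving support $\{1,4,5\}\cup(\{7,\ldots,2(k-1)+8\}\setminus\{2(j-1)+8\}) = F_{1,j}$, while every other Case-A support omits strictly more middle edges and is thus contained in one of these $F_{1,j}$. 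The symmetric analysis in Case B produces the facets $F_{2,j}$. I expect the last maximality argument to be the main obstacle: one must verify both that each $F_{1,i}$ and $F_{2,j}$ is genuinely attained by a solution $\underline{c} \in \mathbb{N}^r$ and that no solution's support strictly exceeds them, so that the listed sets are exactly the facets rather than merely faces of $\Delta$.
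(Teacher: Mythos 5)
Your proof is correct, and it takes a genuinely different route from the paper's. The paper argues in two separate directions: it first exhibits $\underline{s}-\underline{n}_{F_{1,i}}=\underline{a}_{2(i-1)+7}\in S_G$ (and similarly for $F_{2,j}$) to show the listed sets are faces, and then rules out larger faces by producing a short list of non-faces ($\{1,2\}$, $\{1,6\}$, $\{2,4\}$, $F_0=\{7,\ldots,2(k-1)+8\}$, etc.), each excluded by an ad hoc counting argument in the semigroup. You instead observe that $F\in\Delta_{\underline{s}}$ iff $F$ is contained in the support of some $\underline{c}\in\mathbb{N}^r$ with $\sum_l c_l\underline{a}_l=\underline{s}$, and then solve that system completely: the boundary equations force $c_1+c_6=1$, splitting into your Cases A and B, and the constraint $\sum_i a_i=k+1$ (resp.\ $k-1$) forces every support to omit at least one edge $2(j-1)+8$ (resp.\ $2(j-1)+7$), with equality of supports exactly at the $F_{1,j}$ (resp.\ $F_{2,j}$). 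What your approach buys is a complete description of \emph{all} faces in one pass, which makes the ``no other facets'' direction airtight: in particular it automatically excludes candidates such as $\{1,4,5\}\cup(F_0\setminus\{2(j-1)+7\})$, which contain none of the paper's listed non-faces and hence are not visibly eliminated by the paper's reduction as literally stated. The cost is that you must set up and solve the full linear system over $\mathbb{N}$, whereas the paper only ever computes a handful of small degree vectors. The two worries you flag at the end --- attainment of each $F_{1,i}$, $F_{2,j}$ and maximality --- are in fact already settled by your own analysis (attainment is the solution with a single slot $j$ having $a_j=2$ and all others $a_i=b_i=1$; maximality follows since every Case-A support omits some $2(j-1)+8$ and hence lies in $F_{1,j}$, and the $2k$ listed sets are pairwise incomparable), so no genuine obstacle remains.
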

\begin{proof}
  Since $\underline{s} - \underline{n}_{F_{1,i}} 
  = \underline{a}_{2(i-1)+7} \in S_G$, we have 
  $F_{1,i} \in {\Delta}_{\underline{s}} = \Delta$. 
  (It follows that $\underline{s} \in S_G$.) 
  Similarly, we have $F_{2,j} \in \Delta$. 

  \par
  To prove that there are no facet other than $F_{1,i}, F_{2,j}$, 
  it is enough to show that 
  \begin{itemize}
  \item $\{ 1, 2 \}, \{ 1, 3 \}, \{ 4, 6 \}, \{ 5, 6 \} \notin \Delta$; 
  \item $\{ 1, 6 \} \notin \Delta$; 
  \item $\{ 2, 4 \}, \{ 2, 5 \}, \{ 3, 4 \}, \{ 3, 5 \} \notin \Delta$; 
  \item $F_0 = \{ 7, 8, \ldots, 2(k-1)+8 \} \notin \Delta$. 
  \end{itemize}

  \par
  Since the first entry of $\underline{s} - \underline{n}_{\{ 1, 2 \}}$ 
  is $-1 < 0$, it follows that 
  $\underline{s} - \underline{n}_{\{ 1, 2 \}} \notin S_G$. 
  Therefore $\{ 1, 2 \} \notin \Delta$. 
  By the symmetry, we also have  
  $\{ 1, 3 \}, \{ 4, 6 \}, \{ 5, 6 \} \notin \Delta$. 

  \par
  Second we show that $\{ 1, 6 \} \notin \Delta$. 
  Suppose, on the contrary, that $\{ 1, 6 \} \in \Delta$, i.e., 
  \begin{displaymath}
    \underline{s} - \underline{n}_{\{ 1, 6 \}} 
    = (0, 0, k+1, k+1, 0, 0, 2, 2, \ldots, 2) \in S_G.
  \end{displaymath}
  Then we can write $\underline{s} - \underline{n}_{\{ 1, 6 \}} 
  = \sum_{l=1}^{r} c_l \underline{a}_l$, where $c_l \in \mathbb{N}$. 
  Since $(\underline{s} - \underline{n}_{\{ 1, 6 \}})_1 
   = (\underline{s} - \underline{n}_{\{ 1, 6 \}})_2 = 0$ and 
  $(\underline{s} - \underline{n}_{\{ 1, 6 \}})_3 = k+1$, we have 
  $c_1 = c_2 = c_3 = 0$ and $\sum_{i=1}^k c_{2(i-1)+7} = k+1$. 
  Similarly, we have $c_4 = c_5 = c_6 = 0$ and 
  $\sum_{j=1}^k c_{2(j-1)+8} = k+1$. 
  Then $\sum_{i=1}^k c_{2(i-1)+7} + \sum_{j=1}^k c_{2(j-1)+8} = 2(k+1)$, 
  but it must be $2k$. This is a contradiction. 

  \par
  Next we show that 
  $\{ 2, 4 \}, \{ 2, 5 \}, \{ 3, 4 \}, \{ 3, 5 \} \notin \Delta$. 
  Suppose that $\{ 2, 4 \} \in \Delta$, i.e., 
  \begin{displaymath}
    \underline{s} - \underline{n}_{\{ 2, 4 \}} 
    = (0, 1, k, k, 0, 1, 2, 2, \ldots, 2) \in S_G.
  \end{displaymath}
  Then we can write $\underline{s} - \underline{n}_{\{ 2, 4 \}} 
  = \sum_{l=1}^{r} c_l \underline{a}_l$, where $c_l \in \mathbb{N}$. 
  Since $(\underline{s} - \underline{n}_{\{ 2, 4 \}})_1 = 0$ and 
  $(\underline{s} - \underline{n}_{\{ 2, 4 \}})_2 = 1$, we have $c_3 = 1$. 
  Similarly, we have $c_5 = 1$. Thus
  \begin{displaymath}
    (0, 0, k-1, k-1, 0, 0, 2, 2, \ldots, 2) \in S_G.
  \end{displaymath}
  Then the similar argument on the proof of $\{ 1, 6 \} \notin \Delta$ 
  yields a contradiction.  Therefore $\{ 2, 4 \} \notin \Delta$. 
  By the symmetry, we also have 
  $\{ 2, 5 \}, \{ 3, 4 \}, \{ 3, 5 \} \notin \Delta$. 

  \par
  Last, we show $F_0 \notin \Delta$. It follows from 
  \begin{displaymath}
    \underline{s} - \underline{n}_{F_0} 
    = (1, 1, 1, 1, 1, 1, 0, 0, \ldots, 0) \notin S_G.
  \end{displaymath}
\end{proof}

\par
Now we prove Lemma \ref{claim:homology}. 
\begin{proof}[Proof of Lemma \ref{claim:homology}]
Let $\Delta_1$ be the subcomplex of $\Delta$ 
whose facets are $F_{1,i}$, $i=1, \ldots, k$, and $\Delta_2$ 
the subcomplex of $\Delta$ whose facets are $F_{2,j}$, $j=1, \ldots, k$. 
Then $\Delta = \Delta_1 \cup \Delta_2$. Also facets of the simplicial complex 
$\Delta_1 \cap \Delta_2$ are 
\begin{displaymath}
  \{ 7, 8, \ldots, 2(k-1) + 8 \} \setminus \{ 2(j-1) + 7, 2(i-1) + 8 \}, 
  \qquad i, j = 1, \ldots, k. 
\end{displaymath}
In particular, $\dim \Delta_1 \cap \Delta_2 = 2k-3$. 
Note that both of $\Delta_1$ and $\Delta_2$ are cones over some simplicial 
complexes and so the reduced homologies of these all vanish. Therefore the
Mayer--Vietoris sequence 
\begin{displaymath}
  \begin{aligned}
    \cdots 
    &\longrightarrow \tilde{H}_{i} (\Delta_1 \cap \Delta_2; K)
    \longrightarrow \tilde{H}_i (\Delta_1; K) \oplus \tilde{H}_i (\Delta_2; K)
    \longrightarrow \tilde{H}_i (\Delta; K) \\
    &\longrightarrow \tilde{H}_{i-1} (\Delta_1 \cap \Delta_2; K)
    \longrightarrow 
      \tilde{H}_{i-1} (\Delta_1; K) \oplus \tilde{H}_{i-1} (\Delta_2; K)
    \longrightarrow \cdots
  \end{aligned}
\end{displaymath}
yields 
\begin{displaymath}
  \tilde{H}_i (\Delta; K) \cong \tilde{H}_{i-1} (\Delta_1 \cap \Delta_2; K)
  \qquad \text{for all $i$.}
\end{displaymath}

\par
We can see $\tilde{H}_{2k-3} (\Delta_1 \cap \Delta_2; K) \neq 0$ 
by considering the alternating sum of all facets of $\Delta_1 \cap \Delta_2$: 
\begin{displaymath}
  \sum_{1 \leq i,j \leq k} (-1)^{i+j} 
  \{ 7, 8, \ldots, 2(k-1) + 8 \} \setminus \{ 2(j-1) + 7, 2(i-1) + 8 \}. 
\end{displaymath}
Therefore we have $\tilde{H}_{2k-2} (\Delta; K) \neq 0$. 

\end{proof}

\section{Proof of $\depth K[G_{k+6}] \geq 7$}
Let, as before, $G = G_{k+6}$ of Figure 0.1.
In this section we prove that $\depth K[G] \geq 7$. 

\par
We set $C_1 = (e_2, e_1, e_3)$ and $C_2 = (e_4, e_6, e_5)$, 
both of which are $3$-cycles of $G$. 
By \cite[Lemma 3.2]{OH99}, 
there are $3$ kinds of primitive even closed walks $\Gamma$ of $G$ 
up to the way: 
\begin{enumerate}
\item[(I)] a $4$-cycle: 
  $\Gamma = (e_{2(i-1)+7}, e_{2(i-1)+8}, e_{2(j-1)+8}, e_{2(j-1)+7})$, 
  where $i<j$; 
\item[(II)] a walk on two $3$-cycles $C_1, C_2$ and a single path 
  connecting $C_1$ and $C_2$: 
  $\Gamma 
    = (C_1, e_{2(i-1)+7}, e_{2(i-1)+8}, C_2, e_{2(i-1)+8}, e_{2(i-1)+7})$, 
  where $i=1, \ldots, k$; 
\item[(III)] a walk on two $3$-cycles $C_1, C_2$ and two different paths 
  combining $C_1$ and $C_2$: 
  $\Gamma 
    = (C_1, e_{2(i-1)+7}, e_{2(i-1)+8}, C_2, e_{2(j-1)+8}, e_{2(j-1)+7})$, 
  where $i<j$. 
\end{enumerate}

It was proved in \cite[Lemma 3.1]{OH99} that binomials corresponding 
to these primitive even closed walks generate the toric ideal $I_G$. 
Let us consider the lexicographic order $< = <_{\lex}$ with 
$x_1 > x_2 > x_3 > \cdots > x_{2(k-1)+8}$. 

\begin{lemma}
  The set of binomials corresponding to primitive even closed walks 
  (I), (II), (III) is a Gr\"{o}bner basis of $I_G$ with respect to $<_{\lex}$. 
\end{lemma}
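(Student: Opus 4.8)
The plan is to invoke Buchberger's criterion. First I would record the leading terms of the three families of binomials with respect to $<_{\lex}$. Writing $a_i = x_{2(i-1)+7}$ (the edge $\{3,i+6\}$) and $b_i = x_{2(i-1)+8}$ (the edge $\{4,i+6\}$) for $i=1,\ldots,k$, a direct reading of the walks (I), (II), (III) gives the binomials
\[
a_i b_j - a_j b_i \ (i<j),\qquad
x_1x_4x_5\,a_i^2 - x_2x_3x_6\,b_i^2,\qquad
x_1x_4x_5\,a_ia_j - x_2x_3x_6\,b_ib_j \ (i<j),
\]
respectively. Since $x_1$ is the largest variable and appears only on the $x_1x_4x_5$ side, the leading terms of (II) and (III) are $x_1x_4x_5\,a_i^2$ and $x_1x_4x_5\,a_ia_j$; for (I), comparing the smallest indices present shows the leading term is $a_ib_j$.

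By \cite[Lemma 3.1]{OH99} these binomials generate $I_G$, so by Buchberger's criterion it is enough to show that every $S$-polynomial reduces to $0$ modulo the set. Buchberger's first criterion discards every pair whose leading terms are coprime, so I would treat only pairs sharing a variable; these split into six groups (I--I, I--II, I--III, II--II, II--III, III--III). The key structural observation driving all of them is that the type (I) binomials are exactly the $2\times 2$ minors of the $2\times k$ array with rows $(a_1,\ldots,a_k)$ and $(b_1,\ldots,b_k)$, and these minors are themselves a Gr\"obner basis (the classical $2\times k$ determinantal, equivalently Segre, case). Consequently any difference of two monomials in the $a_i,b_i$ alone that lies in $I_G$ reduces to $0$ through the type (I) binomials.

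With this in hand the reductions are routine, and I would present one representative computation in each group, indicating how the rest follow by relabeling. For example, the $S$-polynomial of two type (I) binomials sharing an $a_i$ or a $b_j$ collapses to a scalar multiple of a third (I)-binomial; the pair $(\mathrm{I})_{i,j}$ with $(\mathrm{II})_i$ reduces to $0$ after a single reduction by $(\mathrm{III})_{i,j}$; and two binomials of type (II)/(III) sharing the common factor $x_1x_4x_5$ yield, once their leading terms cancel, precisely $x_2x_3x_6$ times a difference of monomials in the $a_i,b_i$ (the cofactors being pure in the $a_i$). One checks this difference is balanced, i.e.\ has the same image in $K[G]$, hence lies in $I_G$, and then it reduces to $0$ by the determinantal structure above.

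The main obstacle is the bookkeeping for the II/III--II/III pairs: since they all share $x_1x_4x_5$, \emph{every} such pair produces a nontrivial $S$-polynomial, and for each configuration of the index sets $\{i\}$ and $\{i,j\}$ (disjoint, or overlapping in one or two indices) one must verify that the surviving $a,b$-difference is genuinely a consequence of the $2\times 2$ minors and that the chosen reduction terminates at $0$ under $<_{\lex}$. Organizing these numerous but individually easy cases compactly is where the real work lies.
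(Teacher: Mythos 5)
Your proposal is correct, and its overall architecture---Buchberger's criterion, discarding pairs with coprime leading terms, and a case split into the six type-pairings with the same leading terms as in the paper---coincides with the paper's proof. The genuine difference is in how you dispose of the hardest block of cases, the (II)/(III) versus (II)/(III) pairs. The paper handles these (its Cases 4--6) by explicit computation: for each configuration of the index sets it writes the surviving binomial $x_2x_3x_6(m_1b_ib_j-m_2b_pb_q)$ as an explicit $K[\xb]$-combination of type (I) generators, e.g.\ $x_ix_{q+1}(x_ix_{p+1}-x_{i+1}x_p)+x_{i+1}x_p(x_ix_{q+1}-x_{i+1}x_q)$, and checks termination by hand. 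You instead observe that the type (I) binomials are exactly the $2\times 2$ minors of the $2\times k$ matrix with rows $(a_1,\ldots,a_k)$, $(b_1,\ldots,b_k)$, that the induced lex order is a diagonal order for this matrix, and that the surviving cofactor binomial is balanced (it lies in $I_G\cap K[a,b]$, the toric ideal of the $K_{2,k}$ subgraph, i.e.\ the Segre/determinantal ideal), so it reduces to $0$ by the classical fact that such minors form a Gr\"obner basis under a diagonal order. This buys a shorter, more conceptual treatment of Cases 4--6 at the price of importing an external (though standard) theorem; the paper's version is longer but entirely self-contained. Two small points to tighten if you write it out: in the I--I case the S-polynomial is a \emph{monomial} (not scalar) multiple of another type (I) binomial, which still reduces in one step since leading terms are multiplicative; and the balancedness of the cofactor binomial is cleanest to justify by noting $S(f,g)\in I_G$, $I_G$ is prime, and $x_2x_3x_6\notin I_G$.
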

\begin{proof}
The result follows from a straightforward application of Buchberger's algorithm to the set of generators of $I_G$ corresponding to the primitive even closed walks listed above.  Let $f$ and $g$ be two such generators. 
We will prove that the $S$-polynomial, $S(f,g)$, yielding from 
Buchberger's algorithm will reduce to $0$ by generators of type 
(I), (II) and (III). 
For convenience of notation, we will assume that $i,j,p,$ and $q$ are all odd integers such that $7\leq i<j, 7\leq p<q$.

\underline{Case 1:} Let $f = x_ix_{j+1}-x_{i+1}x_j$ and $g = x_px_{q+1}-x_{p+1}x_q$ be generators of type (I).  If $i\neq p$ and $j\neq q$, then the leading terms of $f$ and $g$ are relatively prime and thus the S-polynomial $S(f,g)$ will reduce to $0$
(e.g., \cite[Lemma 2.3.1]{HerzogHibi}).  Suppose $i = p$, then 
\begin{eqnarray*}
S(f,g) &=& \frac{\lcm(f,g)}{LT_{<_{\lex}}(f)}f-\frac{\lcm(f,g)}{LT_{<_{\lex}}(g)}g\\
	     &=& x_{q+1}(x_ix_{j+1}-x_{i+1}x_j)-x_{j+1}(x_ix_{q+1}-x_{i+1}x_q)\\
	     &=& x_{i+1}x_{j+1}x_q-x_{i+1}x_jx_{q+1}\\
	     &=& x_{i+1}(x_{j+1}x_q-x_jx_{q+1}).
\end{eqnarray*}
Note that, up to sign, $x_{j+1}x_q-x_jx_{q+1}$ is a generator of $I_G$ of type (I) and therefore $S(f,g)$ will reduce to $0$.  The case of $j=q$ is similar.  

\underline{Case 2:} Let $f$ be the same as above and $g = x_1x_4x_5x_p^2-x_2x_3x_6x_{p+1}^2$ a generator of type (II).  
If $i \neq p$ then the leading terms of $f$ and $g$ are relatively prime 
and therefore negligible.  If $i = p$ then
\begin{eqnarray*}
S(f,g) &=& x_1x_4x_5x_i(x_ix_{j+1}-x_{i+1}x_j)-x_{j+1}(x_1x_4x_5x_i^2-x_2x_3x_6x_{i+1}^2)\\
		 &=& x_2x_3x_6x_{i+1}^2x_{j+1}-x_1x_4x_5x_ix_{i+1}x_j\\
	     &=& -x_{i+1}(x_1x_4x_5x_ix_j-x_2x_3x_6x_{i+1}x_{j+1})
\end{eqnarray*}
where $x_1x_4x_5x_ix_j-x_2x_3x_6x_{i+1}x_{j+1}$ is a generator of type (III). 

\underline{Case 3:}  Again, we assume that $f$ is the same as above.  Now assume $g$ is of type (III), $g=x_1x_4x_5x_px_q-x_2x_3x_6x_{p+1}x_{q+1}$.  If $i\neq p,q$ then the leading terms of $f$ and $g$ will be relatively prime.  Suppose $i = p$, then
\begin{eqnarray*}
S(f,g) &=& x_1x_4x_5x_q(x_ix_{j+1}-x_{i+1}x_j)-x_{j+1}(x_1x_4x_5x_ix_q-x_2x_3x_6x_{i+1}x_{q+1})\\
	     &=& -x_{i+1}(x_1x_4x_5x_qx_j-x_2x_3x_6x_{q+1}x_{j+1})
\end{eqnarray*}
and again we have that $x_1x_4x_5x_qx_j-x_2x_3x_6x_{q+1}x_{j+1}$ 
is either a type (II) or type (III) generator of $I_G$.  
The case of $i=q$ is similar.

\underline{Case 4:}  Now let $f$ and $g$ both be generators of type (II), 
$f=x_1x_4x_5x_i^2-x_2x_3x_6x_{i+1}^2, g=x_1x_4x_5x_j^2-x_2x_3x_6x_{j+1}^2.$  Then the $S$-polynomial
\begin{eqnarray*}
S(f,g) &=& x_j^2(x_1x_4x_5x_i^2-x_2x_3x_6x_{i+1}^2)-x_i^2(x_1x_4x_5x_j^2-x_2x_3x_6x_{j+1}^2)\\
	     &=& x_2x_3x_6(x_i^2x_{j+1}^2-x_{i+1}^2x_j^2)\\
	     &=& x_2x_3x_6(x_ix_{j+1}+x_{i+1}x_j)(x_ix_{j+1}-x_{i+1}x_j)
\end{eqnarray*}
is a multiple of a type (I) generator.

\underline{Case 5:}  Let $f$ be the same as in Case 4 and $g=x_1x_4x_5x_px_q-x_2x_3x_6x_{p+1}x_{q+1}$ of type (III).  First suppose that $i\neq p,q$. 
Let us consider the case of $i<p$. Then 
\begin{eqnarray*}
S(f,g) &=& x_px_q(x_1x_4x_5x_i^2-x_2x_3x_6x_{i+1}^2)-x_i^2(x_1x_4x_5x_px_q-x_2x_3x_6x_{p+1}x_{q+1})\\
	     &=& x_2x_3x_6(x_i^2x_{p+1}x_{q+1}-x_{i+1}^2x_px_q)\\
	     &=& x_2x_3x_6[x_ix_{q+1}(x_ix_{p+1}-x_{i+1}x_p)+x_ix_{i+1}x_px_{q+1}-x_{i+1}^2x_px_q]\\
	     &=& x_2x_3x_6[x_ix_{q+1}(x_ix_{p+1}-x_{i+1}x_p)+x_{i+1}x_p(x_ix_{q+1}-x_{i+1}x_q)].
\end{eqnarray*}
And so $S(f,g)$ reduce to 0 by two type (I) generators.
The cases of $p<i<q$ and $q<i$ are similar. 

Now suppose $i = p$, then  the $S$-polynomial,
\begin{eqnarray*}
S(f,g) &=& x_q(x_1x_4x_5x_i^2-x_2x_3x_6x_{i+1}^2)-x_i(x_1x_4x_5x_ix_q-x_2x_3x_6x_{i+1}x_{q+1})\\
	     &=& x_2x_3x_6x_{i+1}(x_{i}x_{q+1}-x_{i+1}x_q).
\end{eqnarray*}
is a multiple of a type (I) generator.  The case of $i = q$ is similar.

\underline{Case 6:}  Finally, we let consider the case that both $f$ and $g$ are of type (III): $f = x_1x_4x_5x_ix_j-x_2x_3x_6x_{i+1}x_{j+1}$, $g =x_1x_4x_5x_px_q-x_2x_3x_6x_{p+1}x_{q+1}$. 
We may assume that $i \leq p$. 
Let us first suppose that $i,j\neq p,q$, then
\begin{eqnarray*}
S(f,g) &=& x_px_q(x_1x_4x_5x_ix_j-x_2x_3x_6x_{i+1}x_{j+1})-x_ix_j(x_1x_4x_5x_px_q-x_2x_3x_6x_{p+1}x_{q+1})\\
         &=& x_2x_3x_6(x_ix_jx_{p+1}x_{q+1}-x_{i+1}x_{j+1}x_px_q)\\
         &=& x_2x_3x_6[x_jx_{q+1}(x_ix_{p+1}-x_{i+1}x_p) + x_{i+1}x_p(x_jx_{q+1} -x_{j+1}x_q)].
\end{eqnarray*}

Now let $i =p$.  We then have
\begin{eqnarray*}
S(f,g) &=& x_qf-x_jg = -x_qx_2x_3x_6x_{i+1}x_{j+1} + x_jx_2x_3x_6x_{i+1}x_{q+1}\\
         &=& x_2x_3x_6x_{i+1}(x_jx_{q+1}-x_{j+1}x_q).
\end{eqnarray*}
The cases of $j=p$ and $j=q$ are similar.

\end{proof}

\par
Now we prove that $\depth K[G] \geq 7$. We denote by $\initial (I_G)$, 
the initial ideal of $I_G$ with respect to $<_{\lex}$. 
Since 
\begin{displaymath}
  \depth K[G] = \depth K[\xb]/I_G \geq \depth K[\xb]/\initial (I_G), 
\end{displaymath}
it is sufficient to prove that $\depth K[\xb]/\initial (I_G) \geq 7$. 
By Auslander--Buchsbaum formula, it is enough to prove the following lemma: 
\begin{lemma}
  \begin{displaymath}
    \pd_{K[\xb]} K[\xb]/\initial (I_G) \leq 2k-1. 
  \end{displaymath}
\end{lemma}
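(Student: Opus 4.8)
The plan is to bound the projective dimension of $K[\xb]/\initial(I_G)$ by first writing down the monomial generators of $\initial(I_G)$ explicitly, using the Gr\"obner basis computed in the previous lemma. For each of the three families (I), (II), (III), the leading term under $<_{\lex}$ is the first monomial in the corresponding binomial, so $\initial(I_G)$ is generated by the monomials $x_ix_{j+1}$ (from (I), for $7 \leq i < j$ with $i,j$ odd), $x_1x_4x_5x_i^2$ (from (II)), and $x_1x_4x_5x_ix_j$ (from (III)). First I would record these generators precisely and observe that the generators of type (III) are divisible by those of type (II) in the relevant overlap, or more usefully, factor out the common monomial structure to simplify the ideal.

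The main tool I would use is the fact that projective dimension of a monomial ideal can be controlled by homological invariants that behave well under the standard operations. Specifically, I would try to compute $\pd$ via the Taylor complex bound or, more sharply, by using a mapping-cone or short-exact-sequence induction on $k$. The natural inductive step is to separate off the variables $x_{2(k-1)+7}$ and $x_{2(k-1)+8}$ corresponding to the ``last'' path in the graph $G_{k+6}$, relating $\initial(I_{G_{k+6}})$ to $\initial(I_{G_{k+5}})$ via a colon/sum short exact sequence
\begin{displaymath}
  0 \longrightarrow \frac{K[\xb]}{(\initial(I_G) : u)} \longrightarrow \frac{K[\xb]}{\initial(I_G)} \longrightarrow \frac{K[\xb]}{(\initial(I_G), u)} \longrightarrow 0,
\end{displaymath}
where $u$ is a suitably chosen variable or monomial. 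Applying the depth lemma (or the analogous inequality for projective dimension) to this sequence, together with the inductive hypothesis, should give the bound $\pd \leq 2k-1$, provided the two outer modules have controlled projective dimension.

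An alternative, and perhaps cleaner, approach is to invoke Lemma \ref{claim:betti} in reverse: since passing to the initial ideal can only increase Betti numbers, and since we already know from Section $1$ that $\pd K[G] \geq 2k-1$, it suffices to establish the matching upper bound $\pd K[\xb]/\initial(I_G) \leq 2k-1$ by exhibiting that the multigraded Betti numbers $\beta_{i,\underline{s}}$ vanish for $i > 2k-1$. Concretely, I would analyze the Koszul-type (or Lyubeznik/lcm-lattice) resolution of the monomial ideal and argue that no squarefree (or relevant) degree supports homology in homological position exceeding $2k-1$; this amounts to checking that the lcm-lattice of $\initial(I_G)$ has a specific structure of bounded complexity.

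The hard part will be the upper bound itself: while Section $1$ furnishes the lower bound $\pd \geq 2k-1$ directly from a single nonvanishing Betti number, proving that $2k-1$ is also an upper bound requires controlling \emph{all} homological degrees simultaneously, which is a global statement about the resolution rather than a single degree computation. I expect the cleanest route is the short-exact-sequence induction, but setting up the induction correctly — in particular identifying the right monomial $u$ so that both the colon ideal and the sum ideal admit a clean combinatorial description matching $\initial(I_{G_{k+5}})$ — is the delicate step. A careful bookkeeping of how the type (I), (II), (III) generators split under this operation, and verifying that the depth lemma does not lose more than one unit of projective dimension at each step, is where the real work lies.
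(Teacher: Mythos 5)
Your computation of the generators of $\initial(I_G)$ is correct and matches the paper's starting point, but after that the proposal never actually proves the bound: you list two candidate strategies (an induction on $k$ via a colon/sum short exact sequence, and an lcm-lattice analysis) and then explicitly defer the substance of both ("where the real work lies"). Neither is carried out, the monomial $u$ for the induction is never identified, the base case and the colon computation are absent, and the lcm-lattice alternative is not made precise. The remark that passing to the initial ideal can only increase Betti numbers, combined with the lower bound from Section~1, gives no information in the direction you need here, since an upper bound on $\pd K[\xb]/\initial(I_G)$ is what is required.

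The key idea you are missing is structural and makes the whole argument non-inductive. Writing $\mathfrak{m}=(x_7,x_9,\ldots,x_{2(k-1)+7})$ and letting $I'$ be the ideal of type (I) leading terms, one has
\begin{displaymath}
  \initial(I_G) \;=\; x_1x_4x_5\,\mathfrak{m}^2 + I' \;=\; \bigl(\mathfrak{m}^2+I'\bigr)\cap\bigl((x_1x_4x_5)+I'\bigr) \;=\; I_1\cap I_2,
\end{displaymath}
and the short exact sequence $0\to K[\xb]/(I_1\cap I_2)\to K[\xb]/I_1\oplus K[\xb]/I_2\to K[\xb]/(I_1+I_2)\to 0$ bounds $\pd K[\xb]/\initial(I_G)$ by the maximum of $\pd K[\xb]/I_1$, $\pd K[\xb]/I_2$, and $\pd K[\xb]/(I_1+I_2)-1$. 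Each piece is then controlled by counting variables: $I_1$ involves only $2k-1$ variables (neither $x_1,\ldots,x_6$ nor $x_8$ occurs in its generators), so Hilbert's syzygy theorem gives $\pd K[\xb]/I_1\le 2k-1$; $I'$ involves only $2k-2$ variables (it misses $x_8$ and $x_{2(k-1)+7}$ as well), and since $x_1x_4x_5$ lives in disjoint variables, $\pd K[\xb]/I_2=\pd K[\xb]/I'+1\le 2k-1$ and $\pd K[\xb]/(I_1+I_2)=\pd K[\xb]/I_1+1\le 2k$. This yields $\pd K[\xb]/\initial(I_G)\le 2k-1$ directly. Without this intersection decomposition (or a completed version of your induction), the statement remains unproved.
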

\begin{proof}
  First we compute $\initial (I_G)$. 

  \par
  The binomials corresponding to type (I) are 
  \begin{displaymath}
    x_{2(i-1)+7} x_{2(j-1)+8} - x_{2(i-1)+8} x_{2(j-1)+7}, 
    \qquad \text{where $i<j$}. 
  \end{displaymath}
  The initial term of this binomial is $x_{2(i-1)+7} x_{2(j-1)+8}$ ($i<j$). 
  We denote by $I'$, the ideal generated by these monomials. 
  Note that $x_8$ and $x_{2(k-1)+7}$ do not appear in the minimal system 
  of monomial generators of $I'$. 

  \par
  The binomials corresponding to types (II), (III) are
  \begin{displaymath}
    x_2 x_3 x_6 x_{2(i-1)+8} x_{2(j-1)+8} 
    - x_1 x_4 x_5 x_{2(i-1)+7} x_{2(j-1)+7}, 
    \qquad \text{where $i \leq j$}. 
  \end{displaymath}
  The initial term of this binomial is 
  $- x_1 x_4 x_5 x_{2(i-1)+7} x_{2(j-1)+7}$ ($i \leq j$). 

  \par
  Therefore 
  \begin{displaymath}
    \begin{aligned}
      \initial (I_G) 
      &= x_1 x_4 x_5 (x_7, x_9, \ldots, x_{2(k-1)+7})^2 +I' \\
      &= ((x_7, x_9, \ldots, x_{2(k-1)+7})^2 +I') \cap ((x_1 x_4 x_5) + I'). 
    \end{aligned}
  \end{displaymath}
  We set 
  \begin{displaymath}
    \begin{aligned}
      I_1 &= (x_7, x_9, \ldots, x_{2(k-1)+7})^2 +I' \\
      I_2 &= (x_1 x_4 x_5) + I'. 
    \end{aligned}
  \end{displaymath}
  By the short exact sequence 
  $0 \rightarrow K[\xb]/I_1 \cap I_2 \rightarrow K[\xb]/I_1 \oplus K[\xb]/I_2 
     \rightarrow K[\xb]/(I_1 + I_2) \rightarrow 0$, 
  we have 
  \begin{equation}
    \label{eq:pdineq}
    \pd_{K[\xb]} K[\xb]/\initial (I_G)
    \leq \max \{ \pd_{K[\xb]} K[\xb]/I_1, \pd_{K[\xb]} K[\xb]/I_2, \pd_{K[\xb]} K[\xb]/(I_1 + I_2) - 1 \}. 
  \end{equation}

  \par
  Now we investigate each of $\pd_{K[\xb]} K[\xb]/I_1, \pd_{K[\xb]} K[\xb]/I_2, \pd_{K[\xb]} K[\xb]/(I_1 + I_2)$. 

  \par
  First we consider the ideal $I_1$. Note that $x_1, \ldots, x_6$ and $x_8$ 
  do not appear in the minimal system of monomial generators of $I_1$. 
  Let $K[{\xb}']$ be the polynomial ring over $K$ with variables 
  $x_7, x_9, x_{10}, \ldots, x_{2(k-1)+8}$. 
  Then 
  $\pd_{K[\xb]} K[\xb]/I_1 = \pd_{K[{\xb}']} K[{\xb}']/(I_1 \cap K[{\xb}'])$. 
  By Hilbert's syzygy theorem, we have 
  $\pd_{K[{\xb}']} K[{\xb}']/(I_1 \cap K[{\xb}']) \leq 2k-1$. 

  \par
  Next we consider the ideal $I_2 = (x_1 x_4 x_5) + I'$. 
  Since the variables $x_1, x_4, x_5$ do not appear in the minimal systems of 
  generators of $I'$, we have 
  \begin{displaymath}
    \pd_{K[\xb]} K[\xb]/I_2 = \pd_{K[\xb]} K[\xb]/I' + \pd_{K[\xb]} K[\xb]/(x_1 x_4 x_5) = \pd_{K[\xb]} K[\xb]/I' + 1. 
  \end{displaymath}
  Then similarly to the case of $I_1$, we have $\pd_{K[\xb]} K[\xb]/I' \leq 2k-2$. 
  Thus we have $\pd_{K[\xb]} K[\xb]/I_2 \leq 2k-1$. 

  \par
  Last, we consider the ideal $I_1 + I_2 = (x_1 x_4 x_5) + I_1$. 
  The same reason as the case of $I_2$, we have 
  $\pd_{K[\xb]} K[\xb]/(I_1 + I_2) = \pd_{K[\xb]} K[\xb]/I_1 + 1 \leq 2k$. 

  \par
  Combining these results with (\ref{eq:pdineq}), we have 
  $\pd_{K[\xb]} K[\xb]/\initial (I_G) \leq 2k-1$, as desired. 
\end{proof}



\begin{thebibliography}{30}
\bibitem{BCMP}
  E. Briales, A. Campillo, C. Mariju\'{a}n, and P. Pis\'{o}n, 
  Combinatorics of syzygies for semigroup algebras, 
  \textit{Collect.\  Math.\  } \textbf{49} (1998), 239--256. 


\bibitem{HerzogHibi}
  J. Herzog and T. Hibi,
  ``Monomial Ideals,'' GTM, Springer, in press.

\bibitem{OH98}
  H. Ohsugi and T. Hibi, 
  Normal polytopes arising from finite graphs, 
  \textit{J. Algebra} \textbf{207} (1998), 409--426.

\bibitem{OH99}
  H. Ohsugi and T. Hibi, 
  Toric ideals generated by quadratic binomials, 
  \textit{J. Algebra} \textbf{218} (1999), 509--527. 

\end{thebibliography}
\end{document}